\newtheorem{theorem}{Theorem}[section]
\newtheorem{corollary}{Corollary}[section]
\newtheorem{example}{Example}[section]
\numberwithin{equation}{section}
\def\({\left ( }
\def\){\right )}
\def\<{\left < }
\def\>{\right >}
\begin{document}
\title[Lightlike surfaces]{Lightlike surfaces with planar normal sections in
Minkowski $3-$ space}
\author{Feyza Esra Erdo\u{g}an}
\address{Faculty of Arts and Science, Department of Mathematics, Ad\i yaman
University, 02040 Ad\i yaman, TURKEY}
\email{ferdogan@adiyaman.edu.tr}
\author{Bayram \c{S}ah\.{I}n$^*$}
\address{Department of Mathematics, \.{I}n\"{o}n\"{u} University, 44280\ \
Malatya, TURKEY}
\email{bayram.sahin@inonu.edu.tr}
\author{R\i fat G\"{u}ne\c{s}}
\address{Department of Mathematics, \.{I}n\"{o}n\"{u} University, 44280\ \
Malatya, TURKEY}
\email{rifat.gunes@inonu.edu.tr}

\begin{abstract}
In this paper we study lightlike surfaces of Minkowski $3-$ space such that
they have degenerate or non-degenerate planar normal sections. We first show
that every lightlike surface of Minkowski $3-$ space has degenerate planar
normal sections. Then we study lightlike surfaces with non-degenerate planar
normal sections and obtain a characterization for such lightlike surfaces
\end{abstract}

\maketitle

\section{Introduction}

Surfaces with planar normal sections\textbf{\ }in Euclidean spaces were
first studied by Bang-Yen Chen \cite{Bang-yen Chen}. Later such surfaces or
submanifolds have been studied by many authors \cite{Bang-yen Chen}, \cite%
{Young Ho Kim1}, \cite{Young Ho Kim2}, \cite{Shi-jie Li},\cite{Young Ho Kim3}%
. In \cite{Young Ho Kim2}, Y. H. Kim initiated the study of semi-Riemannian
setting of such surfaces. But as far as we know, lightlike surfaces with
planar normal sections have not been studied so far. Therefore, in this
paper we study lightlike surfaces with planar normal sections of $\mathbb{R}%
^3_1$. \newline

We first define the notion of surfaces with planar normal sections as
follows. Let $M$ be a lightlike surface of $\mathbb{R}^3_1$. For a point $p$
in $M$ and a lightlike vector $\xi $ which spans the radical distribution of
a lightlike surface, the vector $\xi $ and transversal space $tr(TM)$ to $M$
at $p$ determine a 2- dimensional subspace $E(p,\xi )$ in $\mathbb{R}^3_1$
through $p$. The intersection of $M$ and $E(p,\xi )$ gives a lightlike curve
$\gamma $ in a neighborhood of $p,$ which is called the normal section of $M$
at the point $p$ in the direction of $\xi $.\newline

For non-degenerate planar normal sections, we present the following notion.
Let $w$ be a spacelike vector tangent to $M$ at $p$ which spans the chosen
screen distribution of $M$. Then the vector $w$ and transversal space $%
tr(TM) $ to $M$ at $p$ determine a 2- dimensional subspace $E(p,w)$ in $%
\mathbb{R}^3_1$ through $p$. The intersection of $M$ and $E(p,w)$ gives a
spacelike curve $\gamma $ in a neighborhood of $p$ which is called the
normal section of $M$ at $p$ in the direction of $w$. According to both
identifications above, $M$ is said to have degenerate pointwise and
spacelike pointwise planar normal sections, respectively if each normal
section $\gamma $ at $p$ satisfies $\gamma ^{\prime }\wedge \gamma ^{\prime
\prime }\wedge \gamma ^{\prime \prime \prime }=0$ at for each $p$ in $M$.

For a lightlike surface with degenerate planar normal sections, in fact, we
show that every lightlike surface of Minkowski $3-$ space has degenerate
planar normal sections. Then for a lightlike surface with non-degenerate
planar normal sections, we obtain two characterizations.

\noindent \line(1,0){100}\newline
{\footnotesize \textit{\small * Corresponding author}} \newpage

We first show that a lightlike surface $M$ in $\mathbb{R}^3_1$ is a
lightlike surface with non-degenerate planar sections if and only if $M$ is
either screen conformal and totally umbilical or $M$ is totally geodesic. We
also obtain a characterization for non-umbilical screen conformal lightlike
surface with non-degenerate planar normal sections.

\section{Preliminaries}

Let $(\bar{M},\bar{g})$ be an $(m+2)$-dimensional semi-Riemannian manifold
with the indefinite metric $\bar{g}$ of index $q\in \{1,...,m+1\}$ and $M$
be a hypersurface of $\bar{M}$. We denote the tangent space at $x \in M$ by $%
T_x M$. Then
\begin{equation*}
T_x M^{\perp}=\{V_x \in T_x \bar{M}|\bar{g}_x(V_x,W_x)=0, \forall W_x \in
T_x M\}
\end{equation*}
and
\begin{equation*}
RadT_x M=T_x M \cap T_x M^{\perp}.
\end{equation*}
Then, $M$ is called a lightlike hypersurface of $\bar{M}$ if $RadT_x M \neq
\{0\}$ for any $x \in M$. Thus $TM^{\perp}=\bigcap_{x\in M} T_x M^{\perp}$
becomes a one- dimensional distribution $Rad TM$ on $M$. Then there exists a
vector field $\xi \neq 0$ on $M$ such that%
\begin{equation*}
g\left( \xi ,X\right) =0,\newline
\ \ \forall X\in \Gamma \left( TM\right),
\end{equation*}
where $g$ is the induced degenerate metric tensor on $M$. We denote $F(M)$
the algebra of differential functions on $M$ and by $\Gamma(E)$ the $F(M)$-
module of differentiable sections of a vector bundle $E$ over $M$.\newline

A complementary vector bundle $S\left( TM\right) $ of $TM^{\perp }=RadTM$ in
$TM$ i,e.,%
\begin{equation}
TM=RadTM\oplus _{orth}S(TM)  \label{1.1}
\end{equation}%
is called a screen distribution on $M$. It follows from the equation above
that $S(TM)$ is a non-degenerate distribution. Moreover, since we assume
that $M$ is paracompact, there always exists a screen $S(TM)$. Thus, along $M
$ we have the decomposition%
\begin{equation}
T\bar{M}_{|M} = S(TM) \oplus_{orth} S(TM)^{\perp}, \quad S(TM) \cap
S(TM)^{\perp} \neq \{0\},  \label{1.2}
\end{equation}
that is, $S(TM)^{\perp }$ is the orthogonal complement to $S(TM)$ in $T\bar{M%
}\mid _{M}$. Note that $S(TM)^{\perp }$ is also a non-degenerate vector
bundle of rank 2. However, it includes $TM^{\perp }=RadTM$ as its sub-bundle.%
\newline

Let $\left( M,g,S(TM)\right) $ be a lightlike hypersurface of a
semi-Riemannian manifold $\left( \bar{M},\bar{g}\right) $. Then there exists
a unique vector bundle $tr(TM)$ of rank 1 over $M$, such that for any
non-zero section $\xi $ of $TM^{\perp }$ on a coordinate neighborhood $%
U\subset M$, there exists a unique section $N$ of $tr(TM)$ on $U$
satisfying: $TM^{\perp }$ in $S(TM)^{\perp }$ and take $V\in \Gamma \left(
F\mid _{U}\right) ,V\neq 0$. Then $\bar{g}\left( \xi ,V\right) \neq 0$ on $U$%
, otherwise $S(TM)^{\perp }$ would be degenerate at a point of $U$ \cite%
{Krishan L. Duggal and Bayram Sahin}. Define a vector field%
\begin{equation*}
N=\frac{1}{\bar{g}\left( V,\xi \right) }\left\{ V-\frac{\bar{g}\left(
V,V\right) }{2\bar{g}\left( V,\xi \right) }\xi \right\}
\end{equation*}%
on $U$ where $V\in $ $\Gamma \left( F\mid _{U}\right) $ such that $\ \bar{g}%
\left( \xi ,V\right) \neq 0$. Then we have%
\begin{equation}
\bar{g}\left( N,\xi \right) =1,\ \bar{g}\left( N,N\right) =0,\ \bar{g}\left(
N,W\right) =0,\ \forall W\in \Gamma \left( S(TM)\mid _{U}\right)  \label{1.3}
\end{equation}
Moreover, from (\ref{1.1}) and (\ref{1.2}) we have the following
decompositions:\
\begin{equation}
T\bar{M}\mid _{M}=S(TM)\oplus _{orth}\left( TM^{\perp }\oplus tr\left(
TM\right) \right) =TM\oplus tr\left( TM\right)  \label{1.4}
\end{equation}%
Locally, suppose $\left\{ \xi ,N\right\} $ is a pair of sections on $%
U\subset M$ satisfying (\ref{1.3}). Define a symmetric $\digamma \left(
U\right) $-bilinear from $B$ and a 1-form $\tau $ on $U$ . Hence on $U$, for
$X,Y\in \Gamma \left( TM\mid _{U}\right) $%
\begin{eqnarray}
\bar{\nabla}_{X}Y &=&\nabla _{X}Y+B\left( X,Y\right) N  \label{1.5} \\
\bar{\nabla}_{X}N &=&-A_{N}X+\tau \left( X\right) N,  \label{1.6}
\end{eqnarray}%
equations (\ref{1.5}) and (\ref{1.6}) are local Gauss and Weingarten
formulae. Since $\bar{\nabla}$ is a metric connection on $\bar{M},$ it is
easy to see that

\begin{equation}
B\left( X,\xi \right) =0,\forall X\in \Gamma \left( TM\mid _{U}\right) .
\label{1.7}
\end{equation}%
Consequently, the second fundamental form of $M$ is degenerate \cite{Krishan
L. Duggal and Bayram Sahin}. Define a local 1-from $\eta $ by%
\begin{equation}
\eta \left( X\right) =\bar{g}\left( X,N\right) ,\forall \in \Gamma (TM\mid
_{U}).  \label{1.8}
\end{equation}%
Let $P$ denote the projection morphism of $\Gamma \left( TM\right) $ on $%
\Gamma \left( S(TM)\right) $ with respect to the decomposition (\ref{1.1}).
We obtain
\begin{eqnarray}
\nabla_{X}PY &=&\nabla_{X}^{\ast }PY+C\left( X,PY\right) \xi  \label{1.9} \\
\nabla_{X}\xi &=&-A_{\xi }^{\ast }X+\varepsilon \left( X\right) \xi  \notag
\\
&=&-A_{\xi }^{\ast }X-\tau \left( X\right) \xi  \label{1.10}
\end{eqnarray}
where $\nabla _{X}^{\ast }Y$ and $A_{\xi }^{\ast }X$ belong to $\Gamma
\left( S\left( TM\right) \right) ,\nabla $ and $\nabla ^{\ast t}$ are linear
connections on $\Gamma \left( S(TM)\right) $ and $TM^{\perp }$ respectively,
$h^{\ast }$is a $\Gamma \left( TM^{\perp }\right) $-valued $\digamma \left(
M\right) $-bilinear form on $\Gamma \left( TM\right) \times $ $\Gamma \left(
S(TM)\right) $ and $A_{\xi }^{\ast }$ is $\Gamma \left( S(TM)\right) $%
-valued $\digamma \left( M\right) $-linear operator on $\Gamma \left(
TM\right) $. We called them the screen fundamental form and screen shape
operator of $S\left( TM\right) ,$ respectively. Define%
\begin{eqnarray}
C\left( X,PY\right) &=&\bar{g}\left( h^{\ast }(X,PY\right) ,N)\
\label{1.11} \\
\varepsilon \left( X\right) &=&\bar{g}\left( \nabla _{X}^{\ast t}\xi
,N\right) ,\forall X,Y\in \Gamma \left( TM\right) ,\   \label{1.12}
\end{eqnarray}%
one can show that $\varepsilon \left( X\right) =-\tau \left( X\right) $.
Here $C\left( X,PY\right) $ is called the local screen fundamental form of $%
S(TM)$. Precisely, the two local second fundamental forms of $M$ and $S(TM)$
are related to their shape operators by%
\begin{eqnarray}
B\left( X,Y\right) &=&\bar{g}\left( Y,A_{\xi }^{\ast }X\right) ,\
\label{1.13} \\
A_{\xi }^{\ast }\xi &=&0\ ,  \label{1.14} \\
\bar{g}\left( A_{\xi }^{\ast }PY,N\right) &=&0\ ,  \label{1.15} \\
C\left( X,PY\right) &=&\bar{g}\left( PY,A_{N}X\right) ,  \label{1.16} \\
\bar{g}\left( N,A_{N}X\right) &=&0.\   \label{1.17}
\end{eqnarray}%
 A lightlike hypersurface $\left(M,g,S(TM)\right) $ of a semi-Riemannian manifold is called totally umbilical\cite{Krishan L. Duggal and Bayram Sahin}
if there is a smooth function $\varrho ,$ such that%
\begin{equation}
B\left( X,Y\right) =\varrho g\left( X,Y\right) ,\forall X,Y\in \Gamma \left(
TM\right)  \label{1.18}
\end{equation}%
where $\varrho $ is non-vanishing smooth function on a neighborhood $U$ in $M
$.

\qquad A lightlike hypersurface $\left( M,g,S(TM)\right) $ of a
semi-Riemannian manifold is called screen locally conformal if the shape
operators $A_{N}$ and $A_{\xi }^{\ast }$ of $M$ and $S(TM)$, respectively,
are related by%
\begin{equation}
A_{N}=\varphi A_{\xi }^{\ast }  \label{1.19}
\end{equation}%
where $\varphi $ is non-vanishing smooth function on a neighborhood $U$ in $M
$. Therefore, it follows that $\forall X,Y\in \Gamma \left( S\left(
TM\right) \right) ,$ $\xi \in RadTM$%
\begin{equation}
C\left( X,\xi \right) =0,  \label{1.20}
\end{equation}%
For details about screen conformal lightlike hypersurfaces, see: \cite{A-D} and \cite%
{Krishan L. Duggal and Bayram Sahin}  .

\section{Planar normal sections \ of lightlike surfaces in $\mathbb{R}^3_1$}

Let $M$ be a lightlike surface of \textbf{\ }$\mathbb{R}^3_1$. Now we
investigate lightlike surfaces with degenerate planar normal sections. If $%
\gamma $ is a null curve, for a point $p$ in $M,$ we have
\begin{eqnarray}
\gamma ^{\prime }\left( s\right) &=&\xi \   \label{2.1} \\
\gamma ^{\prime \prime }\left( s\right) &=&\bar{\nabla }_{\xi }\xi =-\tau
\left( \xi \right) \xi  \label{2.2} \\
\gamma ^{\prime \prime \prime }\left( s\right) &=&\left[ \xi \left( \tau
\left( \xi \right) \right) +\tau ^{2}\left( \xi \right) \right] \xi
\label{2.3}
\end{eqnarray}%
Then, $\gamma ^{\prime \prime \prime }\left( 0\right) $ is a linear
combination of $\gamma ^{\prime }\left( 0\right) $ and $\gamma ^{\prime
\prime }\left( 0\right) $. Thus (\ref{2.1}), (\ref{2.2}) and (\ref{2.3})
give $\gamma ^{\prime \prime \prime }\left( 0\right) \wedge \gamma ^{\prime
\prime }\left( 0\right) \wedge \gamma ^{\prime }\left( 0\right) =0$. Thus
lightlike surfaces always have planar normal sections.

\begin{corollary}
\textbf{\ }Every lightlike surface of $\mathbb{R}^3_1$ has degenerate planar
normal sections.
\end{corollary}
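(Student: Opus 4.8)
The plan is to reduce the claim to the computation already carried out in the text immediately preceding the corollary. The essential observation is that for a lightlike surface $M$ in $\mathbb{R}^3_1$, the radical distribution $RadTM$ is one-dimensional and spanned by a lightlike vector field $\xi$. By the definition of the degenerate normal section given in the introduction, the normal section $\gamma$ at $p$ in the direction of $\xi$ is the intersection of $M$ with the $2$-plane $E(p,\xi)$ spanned by $\xi$ and $tr(TM)$ at $p$; since $\gamma$ lies in $M$ and its initial velocity is $\xi$, it is a null curve with $\gamma'(s)=\xi$ along $\gamma$. So the first step is simply to record that $\gamma' = \xi$.

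Next I would differentiate using the decomposition $\bar\nabla_X Y = \nabla_X Y + B(X,Y)N$ from (\ref{1.5}). Applying this with $X=Y=\xi$ and invoking (\ref{1.7}), namely $B(X,\xi)=0$, kills the transversal term, so $\gamma'' = \bar\nabla_\xi \xi = \nabla_\xi \xi$. Then from the Weingarten-type formula (\ref{1.10}), $\nabla_\xi \xi = -A^*_\xi \xi - \tau(\xi)\xi$, and by (\ref{1.14}) we have $A^*_\xi\xi = 0$, leaving $\gamma'' = -\tau(\xi)\xi$, which is equation (\ref{2.2}). Differentiating once more and again using $B(\cdot,\xi)=0$, $A^*_\xi\xi=0$, one gets $\gamma''' = [\xi(\tau(\xi)) + \tau^2(\xi)]\xi$, which is (\ref{2.3}). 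The key point is that at every stage the derivative remains a scalar multiple of $\xi$.

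The conclusion is then immediate: $\gamma'(0)$, $\gamma''(0)$, $\gamma'''(0)$ are all scalar multiples of the single vector $\xi$, hence they are linearly dependent, so $\gamma'(0)\wedge\gamma''(0)\wedge\gamma'''(0)=0$. Since $p$ was arbitrary, every normal section in the direction of $\xi$ is planar, and therefore $M$ has degenerate planar normal sections.

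There is essentially no obstacle here; the content is entirely in the structure equations (\ref{1.5}), (\ref{1.7}), (\ref{1.10}), (\ref{1.14}), and the only thing to be slightly careful about is justifying that $\gamma'=\xi$ holds along the curve (not just at $p$) so that the higher derivatives are genuinely computed by iterating $\bar\nabla_\xi$, and that the ambient curve derivatives $\gamma^{(k)}$ coincide with the iterated covariant derivatives $\bar\nabla_\xi^{k-1}\xi$ because $\bar\nabla$ is the flat connection of $\mathbb{R}^3_1$. Once that identification is in place, the corollary follows directly from (\ref{2.1})--(\ref{2.3}).
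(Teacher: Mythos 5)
Your proposal is correct and follows essentially the same route as the paper: it reproduces the computation in (\ref{2.1})--(\ref{2.3}) showing that $\gamma'$, $\gamma''$, $\gamma'''$ are all scalar multiples of $\xi$ (using (\ref{1.5}), (\ref{1.7}), (\ref{1.10}), (\ref{1.14})), whence the wedge product vanishes. Your added remark about identifying the ambient derivatives $\gamma^{(k)}$ with iterated covariant derivatives via the flatness of $\bar\nabla$ is a sensible point of care that the paper leaves implicit.
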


\bigskip In fact Corollary 3.1 tells us that the above situation is not
interesting. Now, we will check lightlike surfaces with non-degenerate
planar normal sections. Let $M$ be a lightlike hypersurface of \textbf{\ }$%
\mathbb{R}^3_1$. For a point $p$ in $M$ and a spacelike vector $w\in S(TM)$
tangent to $M$ at $p$ , the vector $w$ and transversal space $tr(TM)$ to $M$
at $p$ determine a 2-dimensional subspace $E(p,w)$ in $\mathbb{R}^3_1$
through $p $. The intersection of $M$ and $E(p,w)$ give a spacelike curve $%
\gamma $ in a neighborhood of $p,$ which is called the normal section of $M$
at $p$ in the direction of $w$. Now, we research the conditions for a
lightlike surface of $\mathbb{R}^3_1$ to have non-degenerate planar normal
sections.

\bigskip Let $\left( M,g,S(TM)\right) $ be a totally umbilical and screen
conformal\ lightlike surface of $\left( \bar{g},\mathbb{R}^3_1\right) $. In
this case $S(TM)$ is integrable\cite{A-D}. We denote integral submanifold of $S(TM)$
by $M^{\prime }$. Then, using (\ref{1.6})$,$ (\ref{1.10}) and (\ref{1.19} )
we find
\begin{equation}
C\left( w,w\right) \xi +B\left( w,w\right) N=\bar{g}\left( w,w\right)
\left\{ \alpha \xi +\beta N\right\} =\left\{ \alpha \xi +\beta N\right\} ,
\label{2.4}
\end{equation}%
where $t,$ $\alpha ,\beta \in \mathbb{R}$. In this case, we obtain%
\begin{eqnarray}
\gamma ^{\prime }\left( s\right) &=&w  \label{2.5} \\
\gamma ^{\prime \prime }\left( s\right) &=&\bar{\nabla}_{w}w=\nabla
_{w}^{\ast }w+C\left( w,w\right) \xi +B\left( w,w\right) N  \label{2.6} \\
\gamma ^{\prime \prime }\left( s\right) &=&\nabla _{w}^{\ast }w+\alpha \xi
+\beta N\   \label{2.7} \\
\gamma ^{\prime \prime \prime }\left( s\right) &=&\nabla _{w}^{\ast }\nabla
_{w}^{\ast }w+C\left( w,\nabla _{w}^{\ast }w\right) \xi +w\left( C\left(
w,w\right) \right) \xi -C\left( w,w\right) A_{\xi }^{\ast }w  \label{2.8} \\
&&+w\left( B\left( w,w\right) \right) N-B\left( w,w\right) A_{N}w+B\left(
w,\nabla _{w}^{\ast }w\right) N  \notag \\
\gamma ^{\prime \prime \prime }\left( s\right) &=&\nabla _{w}^{\ast }\nabla
_{w}^{\ast }w+t\left\{ \alpha \xi +\beta N\right\} -\alpha A_{\xi }^{\ast
}w-\beta A_{N}w.  \label{2.9}
\end{eqnarray}%
Where $\nabla ^{\ast }$ and $\nabla $ are linear connections on $S(TM)$ and $%
\Gamma \left( TM\right) $, respectively and $\gamma ^{\prime }\left(
s\right) =w$. From the definition of planar normal section and that $%
S(TM)=Sp\left\{ w\right\} ,$ we have%
\begin{equation}
w\wedge \nabla _{w}^{\ast }w=0  \label{2.10}
\end{equation}%
and%
\begin{equation}
w\wedge \nabla _{w}^{\ast }\nabla _{w}^{\ast }w=0.  \label{2.11}
\end{equation}%
Then, from (\ref{2.4}), (\ref{2.6}), (\ref{2.8}) and (\ref{2.10}), (\ref%
{2.11}) we obtain $\gamma ^{\prime \prime \prime }\left( s\right) \wedge
\gamma ^{\prime \prime }\left( s\right) \wedge \gamma ^{\prime }\left(
s\right) =0$. Thus, $M$ has planar non-degenerate normal sections.\newline

If $M$ is totally geodesic lightlike surface of $\mathbb{R}^3_1$. Then, we
have $B=0$, $A_{\xi }^{\ast }=0$. Hence (\ref{2.5})-(\ref{2.8}) become%
\begin{eqnarray*}
\gamma ^{\prime }\left( s\right) &=&w \\
\gamma ^{\prime \prime }\left( s\right) &=&\nabla _{w}^{\ast }w+\alpha \xi \\
\gamma ^{\prime \prime \prime }\left( s\right) &=&\nabla _{w}^{\ast }\nabla
_{w}^{\ast }w+t\alpha \xi -\beta A_{N}w.
\end{eqnarray*}%
Since $A_{N}w\in \Gamma \left( TM\right) ,$ we have $\gamma ^{\prime \prime
\prime }\left( s\right) \wedge \gamma ^{\prime \prime }\left( s\right)
\wedge \gamma ^{\prime }\left( s\right) =0$.

\bigskip Conversely, we assume that $M$ has planar non-degenerate normal
sections. Then, from (\ref{2.5}), (\ref{2.6}), (\ref{2.8}) and (\ref{2.10}),
(\ref{2.11}) we obtain%
\begin{equation*}
(C\left( w,w\right) \xi +B\left( w,w\right) N)\wedge \left( C\left(
w,w\right) A_{\xi }^{\ast }w+B\left( w,w\right) A_{N}w\right) =0,
\end{equation*}%
thus $\left( C\left( w,w\right) A_{\xi }^{\ast }w+B\left( w,w\right)
A_{N}w\right) =0$ or $C\left( w,w\right) \xi +B\left( w,w\right) N=0$. If%
\newline
$C\left( w,w\right) A_{\xi }^{\ast }w+B\left( w,w\right) A_{N}w=0,$ then,
from
\begin{equation*}
A_{\xi }^{\ast }w=-\frac{B\left( w,w\right) }{C\left( w,w\right) }A_{N}w
\end{equation*}%
at $p\in M,$ $M$ is a screen conformal lightlike surface with $C\left(
w,w\right) \neq 0$. If $C\left( w,w\right) \xi +B\left( w,w\right) N=0$,
then $RadTM$ is parallel and $M$ is totally geodesic.

\bigskip Consequently, we have the following,

\begin{theorem}
Let $M$ be a lightlike surface of $\mathbb{R}^3_1$. Then $M$ has
non-degenerate planar normal sections if and only if \ either $M$ is
umbilical and screen conformal or $M$ is totally geodesic.
\end{theorem}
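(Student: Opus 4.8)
The plan is to write a non-degenerate normal section $\gamma$ explicitly in terms of the local decomposition data of Section~2, differentiate it three times, and test when $\gamma^{\prime }\wedge \gamma^{\prime \prime }\wedge \gamma^{\prime \prime \prime }=0$. I would fix a unit spacelike field $w$ spanning the screen $S(TM)$ and use $\gamma^{\prime }=w$ along $\gamma$; then the Gauss formula (\ref{1.5}) together with the screen decomposition (\ref{1.9}) gives $\gamma^{\prime \prime }=\bar{\nabla}_{w}w=\nabla^{\ast }_{w}w+C(w,w)\xi +B(w,w)N$, and a further differentiation, substituting (\ref{1.6}) for $\bar{\nabla}_{w}N$ and (\ref{1.10}) for $\nabla_{w}\xi $, produces the expansion (\ref{2.8}) for $\gamma^{\prime \prime \prime }$. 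The key simplification is dimensional: since $\dim M=2$, both $RadTM$ and $S(TM)$ are line bundles, so $\nabla^{\ast }_{w}w$ and $\nabla^{\ast }_{w}\nabla^{\ast }_{w}w$ are pointwise proportional to $w$ (equations (\ref{2.10})--(\ref{2.11})) and hence drop out of any wedge product containing the factor $\gamma^{\prime }=w$; similarly, by (\ref{1.13})--(\ref{1.17}) the vectors $A_{\xi }^{\ast }w$ and $A_{N}w$ lie in $S(TM)$, so they too are multiples of $w$.

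For the ``if'' direction I would treat the two cases separately. If $M$ is totally geodesic, then $B\equiv 0$ and $A_{\xi }^{\ast }\equiv 0$, so $\gamma^{\prime \prime }$ and $\gamma^{\prime \prime \prime }$ both lie in $\Gamma(TM)=Sp\{w,\xi \}$; three vectors in a $2$-plane are linearly dependent, hence $\gamma^{\prime }\wedge \gamma^{\prime \prime }\wedge \gamma^{\prime \prime \prime }=0$. If $M$ is umbilical and screen conformal, then $B(w,w)=\varrho\, g(w,w)$ and $A_{N}=\varphi A_{\xi }^{\ast }$ by (\ref{1.18}) and (\ref{1.19}); feeding this into (\ref{2.8}) collapses $\gamma^{\prime \prime \prime }$ to the shape (\ref{2.9}), in which the $\xi ,N$-part is a scalar multiple of the $\xi ,N$-part $C(w,w)\xi +B(w,w)N$ of $\gamma^{\prime \prime }$ while the remaining terms are multiples of $w$; hence $\gamma^{\prime }\wedge \gamma^{\prime \prime }\wedge \gamma^{\prime \prime \prime }=0$ again.

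For the ``only if'' direction I would run the computation in reverse: from $\gamma^{\prime }\wedge \gamma^{\prime \prime }\wedge \gamma^{\prime \prime \prime }=0$, substituting (\ref{2.5}), (\ref{2.6}) and (\ref{2.8}), the contributions along $w$ drop out of the wedge by (\ref{2.10})--(\ref{2.11}), leaving the pointwise identity
\[
\bigl(C(w,w)\xi +B(w,w)N\bigr)\wedge \bigl(C(w,w)A_{\xi }^{\ast }w+B(w,w)A_{N}w\bigr)=0 .
\]
Since $\xi $ and $N$ are linearly independent and transversal to the screen, this forces a dichotomy: either $C(w,w)A_{\xi }^{\ast }w+B(w,w)A_{N}w=0$ or $C(w,w)\xi +B(w,w)N=0$. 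In the first alternative, on the set where $C(w,w)\neq 0$ one obtains $A_{\xi }^{\ast }w=-\tfrac{B(w,w)}{C(w,w)}A_{N}w$, i.e.\ $A_{\xi }^{\ast }$ and $A_{N}$ are proportional, which with $A_{\xi }^{\ast }\xi =0$, (\ref{1.7}) and the one-dimensionality of the screen is exactly the screen conformal condition (\ref{1.19}); together with the fact that $B(X,\xi )=0$ forces $B=\varrho g$ automatically on a surface, this is the umbilical and screen conformal case, reducing to totally geodesic wherever $B$ vanishes. In the second alternative, linear independence of $\xi $ and $N$ gives $B(w,w)=C(w,w)=0$ for every $w$, so $B\equiv 0$ and, by (\ref{1.5}) and (\ref{1.10}), $RadTM$ is parallel; thus $M$ is totally geodesic.

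The step I expect to be the main obstacle is the ``only if'' direction, and inside it the two delicate points: first, confirming that the $\xi ,N$-component of $\gamma^{\prime \prime \prime }$ is genuinely absorbed into the wedge (this is where the one-dimensionality of the screen and the chosen normalization of $w$ along $\gamma $ do the real work), and second, handling the locus $\{C(w,w)=0\}$, where screen conformality must be propagated by continuity from its complement or else the totally geodesic alternative takes over. Everything else is routine bookkeeping with the Gauss--Weingarten apparatus of Section~2.
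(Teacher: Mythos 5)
Your proposal follows essentially the same route as the paper: the identical expansion of $\gamma'$, $\gamma''$, $\gamma'''$ via (\ref{2.5})--(\ref{2.9}), the reduction by (\ref{2.10})--(\ref{2.11}) to the wedge identity $\left(C(w,w)\xi+B(w,w)N\right)\wedge\left(C(w,w)A_{\xi}^{\ast}w+B(w,w)A_{N}w\right)=0$, and the same two-case dichotomy for the converse. Your added remarks (that $B(X,\xi)=0$ plus one-dimensionality of the screen makes umbilicity automatic, and that screen conformality must be propagated across $\{C(w,w)=0\}$ by continuity) are small refinements of the same argument rather than a different approach.
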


\begin{theorem}
Let\textbf{\ }$\left( M,g,S(TM)\right) $ be a screen conformal non-umbilical
lightlike surface of $\mathbb{R}^3_1$. Then, for $T\left( w,w\right)
=C\left( w,w\right) \xi +B\left( w,w\right) N$ the following statements are
equivalent

\begin{enumerate}
\item $\left( \bar{\nabla}_{w}T\right) \left( w,w\right) =0$, every
spacelike vector $w\in S(TM)$

\item $\bar{\nabla}T=0$

\item $M$ has non-degenerate planar normal sections and each normal section
at $p$ has one of its vertices at $p$
\end{enumerate}

By the vertex of curve $\gamma \left( s\right) $ we mean a point $p$ on $%
\gamma $ such that its curvature $\kappa $ satisfies $\frac{d\kappa
^{2}\left( p\right) }{ds}=0,$ $\kappa ^{2}=\left\langle \gamma ^{\prime
\prime }\left( s\right) ,\gamma ^{\prime \prime }\left( s\right)
\right\rangle $.
\end{theorem}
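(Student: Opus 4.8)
The plan is to show that, under the standing hypotheses (screen conformal, non-totally-umbilical), each of (1), (2) and (3) is equivalent to $M$ being totally geodesic; statement (3) will be dispatched by the previous theorem, and (1)$\Leftrightarrow$(2) by a short computation with the screen-conformal structure equations. First I would record the normal-section data. Fix $p\in M$, let $w$ be the unit spacelike field spanning $S(TM)$ near $p$, and let $\gamma$ be the normal section in the direction of $w$, parametrized by arc length. Since $S(TM)=\mathrm{Sp}\{w\}$ forces $\nabla_{w}^{\ast}w$ to be proportional to $w$, while arc length gives $\langle\gamma'',\gamma'\rangle=0$, we get $\nabla_{w}^{\ast}w=0$ along $\gamma$; hence (\ref{2.5})--(\ref{2.8}) collapse to $\gamma'=w$, $\gamma''=C(w,w)\xi+B(w,w)N=T(w,w)$ and $\gamma'''=\bar{\nabla}_{w}\big(T(w,w)\big)$. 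From $A_{N}=\varphi A_{\xi}^{\ast}$ and $A_{\xi}^{\ast}\xi=0$ (by (\ref{1.14})) one gets $A_{N}\xi=0$, so $C(\xi,\cdot)=0$ and $T(\xi,\cdot)=T(\cdot,\xi)=0$, which identifies $\gamma'''(p)=(\bar{\nabla}_{w}T)(w,w)|_{p}$. Expanding $\bar{\nabla}_{w}\big(C(w,w)\xi+B(w,w)N\big)$ with (\ref{1.6}) and (\ref{1.10}) and substituting $C(w,w)=\varphi B(w,w)$ and $A_{\xi}^{\ast}w=\lvert w\rvert^{-2}B(w,w)\,w$, the only $S(TM)$-valued part of $(\bar{\nabla}_{w}T)(w,w)$ is
\[
-C(w,w)A_{\xi}^{\ast}w-B(w,w)A_{N}w=-\frac{2\varphi\,B(w,w)^{2}}{\lvert w\rvert^{2}}\,w ,
\]
all remaining terms (the $\xi$- and $N$-coefficients, including the connection-form terms) lying in $\mathrm{Sp}\{\xi,N\}$.

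Next I would settle (1)$\Leftrightarrow$(2). The implication (2)$\Rightarrow$(1) is immediate. For (1)$\Rightarrow$(2), read off the $S(TM)$-component of $(\bar{\nabla}_{w}T)(w,w)=0$ from the display above: it gives $\varphi B(w,w)^{2}=0$, and since $\varphi$ is nowhere zero, $B(w,w)=0$ for every spacelike $w\in S(TM)$. As $B(\cdot,\xi)=0$ by (\ref{1.7}) and $S(TM)$ is one-dimensional, $B\equiv 0$; that is, $M$ is totally geodesic. Then $A_{\xi}^{\ast}=0$ by (\ref{1.13}), $C=\varphi B\equiv 0$, so $T\equiv 0$ and \emph{a fortiori} $\bar{\nabla}T=0$.

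Then I would match these with (3) using the previous theorem. If (1) holds, then $M$ is totally geodesic by the previous step; hence by the preceding theorem $M$ has non-degenerate planar normal sections, and $B=C=0$ makes $\gamma''=0$, so $\kappa^{2}\equiv 0$ along every normal section and $p$ is (trivially) a vertex --- so (3) holds. Conversely, if (3) holds, then in particular $M$ has non-degenerate planar normal sections; since $M$ is screen conformal and, by hypothesis, not totally umbilical, the preceding theorem leaves only the possibility that $M$ is totally geodesic, whence $T\equiv 0$ and (1), (2) follow (the vertex clause in (3) is not needed for this direction, being automatic once $M$ is totally geodesic).

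The step I expect to be the main obstacle is the normal-section computation in the first paragraph: carrying the screen-conformal identities through the Gauss--Weingarten formulae, disposing of the operator terms $A_{\xi}^{\ast}w$, $A_{N}w$ and the connection-form terms, and justifying $\nabla_{w}^{\ast}w=0$ for the normal section, so as to isolate the $S(TM)$-component of $(\bar{\nabla}_{w}T)(w,w)$ as a nonzero multiple of $B(w,w)^{2}w$. Once that identity is in hand, (1)$\Leftrightarrow$(2) is immediate and (1)$\Leftrightarrow$(3) reduces to the preceding theorem.
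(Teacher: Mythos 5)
Your proposal is correct, and it reaches the equivalences by a genuinely different route than the paper. The paper argues (1)$\Leftrightarrow$(2) from the single identity $\left( \bar{\nabla}_{w}T\right)(w,w)=\bar{\nabla}_{w}\left(T(w,w)\right)$; it proves (2)$\Rightarrow$(3) by \emph{asserting} that $\bar{\nabla}T=0$ forces $M$ to be totally geodesic and then computing $\kappa^{2}=\left\langle T(w,w),T(w,w)\right\rangle=2C(w,w)B(w,w)$ and $\frac{d\kappa^{2}}{ds}=\left\langle \left(\bar{\nabla}_{w}T\right)(w,w),T(w,w)\right\rangle$; and it proves (3)$\Rightarrow$(1) by combining the planarity relation $T(w,w)\wedge\left(\bar{\nabla}_{w}T\right)(w,w)=0$ with the vertex relation $\left\langle \left(\bar{\nabla}_{w}T\right)(w,w),T(w,w)\right\rangle=0$ to conclude $\left(\bar{\nabla}_{w}T\right)(w,w)=0$ or $T(w,w)=0$, finishing with a continuity argument on the set where $T$ vanishes. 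You instead isolate the $S(TM)$-component of $\left(\bar{\nabla}_{w}T\right)(w,w)$, namely $-C(w,w)A_{\xi}^{\ast}w-B(w,w)A_{N}w=-2\varphi B(w,w)^{2}\lvert w\rvert^{-2}w$, and deduce that each of (1), (2), (3) is equivalent to $B\equiv 0$; condition (3) is handled by Theorem 3.1 together with the non-umbilicity hypothesis, so you never need the vertex clause in the direction (3)$\Rightarrow$(1). What your route buys is a clean justification of exactly the step the paper leaves unproved (that $\bar{\nabla}T=0$, or even just (1), forces total geodesy), and it makes explicit that under the standing hypotheses all three conditions collapse to $T\equiv 0$ --- which is why the paper's formula $\kappa^{2}=2C(w,w)B(w,w)$ ends up vanishing identically. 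What the paper's route retains is the curvature/vertex mechanism (planarity gives proportionality of $T$ and $\bar{\nabla}_{w}T$, the vertex gives their orthogonality, and $\left\langle T,T\right\rangle=2CB$ closes the loop), which would still function in settings where the screen is higher dimensional and $T$ need not vanish; your shortcut is specific to the rank-one screen with $A_{\xi}^{\ast}w$ proportional to $w$. Both arguments rely on the paper's convention that a totally geodesic surface does not count as totally umbilical (since $\varrho$ is required to be non-vanishing), without which the hypotheses would be vacuous.
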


\begin{proof}
From (\ref{2.5}), (\ref{2.6}) and that a screen conformal $M$, we have%
\begin{equation*}
\left( \bar{\nabla}_{w}T\right) \left( w,w\right) =\bar{\nabla}_{w}T\left(
w,w\right)
\end{equation*}%
which shows $\left( a\right) \Leftrightarrow \left( b\right) $. $\left(
b\right) $ $\Rightarrow \left( c\right) $ Assume that $\bar{\nabla}T=0$ . If
$\bar{\nabla}T=0$ then $M$ is totally geodesic and Theorem 3.1 implies that $%
M$ has (pointwise) planar normal sections. Let the $\gamma \left( s\right) $
be a normal section of $M$ at $p$ in a given direction $w\in S(TM)$. Then (%
\ref{2.5}) shows that the curvature $\kappa \left( s\right) $ of $\gamma
\left( s\right) $ satisfies%
\begin{eqnarray}
\kappa ^{2}\left( s\right) &=&\left\langle \gamma ^{\prime \prime }\left(
s\right) ,\gamma ^{\prime \prime }\left( s\right) \right\rangle \   \notag \\
&=&2C(w,w)B(w,w)  \notag \\
&=&\left\langle T\left( w,w\right) ,T\left( w,w\right) \right\rangle \
\label{2.12}
\end{eqnarray}%
where $w=\gamma ^{\prime }\left( s\right) $. Therefore we find%
\begin{equation}
\frac{d\kappa ^{2}\left( p\right) }{ds}=\left\langle \bar{\nabla}_{w}T\left(
w,w\right) ,T\left( w,w\right) \right\rangle =\left\langle \left( \bar{\nabla%
}_{w}T\right) \left( w,w\right) ,T\left( w,w\right) \right\rangle \
\label{2.13}
\end{equation}%
Since $\bar{\nabla}_{w}T\left( w,w\right) =0,$ this implies%
\begin{equation*}
\frac{d\kappa ^{2}\left( 0\right) }{ds}=0
\end{equation*}%
at $p=\gamma \left( 0\right) $. Thus $p$ is a vertex of the normal section $%
\gamma \left( s\right) $. $\left( c\right) \Rightarrow \left( a\right) :$ If
$M$ has planar normal sections, then Theorem 3.1 gives%
\begin{equation}
T\left( w,w\right) \wedge \left( \bar{\nabla}_{w}T\right) \left( w,w\right)
=0.  \label{2.14}
\end{equation}%
If $p$ is a vertex of $\gamma \left( s\right) $, then we have%
\begin{equation*}
\frac{d\kappa ^{2}\left( 0\right) }{ds}=0.
\end{equation*}%
Thus, since $M$ has planar normal sections using (\ref{2.13}) we find%
\begin{eqnarray*}
\gamma ^{\prime }\left( s\right) \wedge \gamma ^{\prime \prime }\left(
s\right) \wedge \gamma ^{\prime \prime \prime }\left( s\right) &=&w\wedge
\left( \nabla _{w}^{\ast }w+T\left( w,w\right) \right) \\
&&\wedge \left( \nabla _{w}^{\ast }\nabla _{w}^{\ast }w+tT\left( w,w\right)
+\left( \bar{\nabla}_{w}T\right) \left( w,w\right) \right) =0 \\
\gamma ^{\prime }\left( s\right) \wedge \gamma ^{\prime \prime }\left(
s\right) \wedge \gamma ^{\prime \prime \prime }\left( s\right) &=&T\left(
w,w\right) \wedge \left( \bar{\nabla}_{w}T\right) \left( w,w\right) =0
\end{eqnarray*}%
and
\begin{equation}
\left\langle \left( \bar{\nabla}_{w}T\right) \left( w,w\right) ,T\left(
w,w\right) \right\rangle =0.  \label{2.15}
\end{equation}%
Combining (\ref{2.14}) and (\ref{2.15}) we obtain $\left( \bar{\nabla}%
_{w}T\right) \left( w,w\right) =0$ or $T\left( w,w\right) =0$. Let us define
$U=\left\{ w\in S(TM)\mid T(w,w)=0\right\} $. If $int(U)\neq \varnothing ,$
we obtain $\left( \bar{\nabla}_{w}T\right) \left( w,w\right) =0$ on $int(U)$%
. Thus, by continuity we have $\bar{\nabla}T=0$.
\end{proof}

\begin{example}
Consider the null cone of $\mathbb{R}^3_1$ given by
\begin{equation*}
\wedge =\left\{ \left( x_{1},x_{2},x_{3}\right) \mid
-x_{1}^{2}+x_{2}^{2}+x_{3}^{2}=0,\newline
x_{1},\newline
x_{2},\newline
x_{3}\in IR\newline
\right\} .
\end{equation*}%
The radical bundle of null cone is
\begin{equation*}
\xi =x_{1}\frac{\partial }{\partial x_{1}}+x_{2}\frac{\partial }{\partial
x_{2}}+x_{3}\frac{\partial }{\partial x_{3}}
\end{equation*}%
and screen distribution is spanned by
\begin{equation*}
Z_{1}=x_{2}\frac{\partial }{\partial x_{1}}+x_{3}\frac{\partial }{\partial
x_{2}}
\end{equation*}%
Then the lightlike transversal vector bundle is given by%
\begin{equation*}
Itr(TM)=Span\{N=\frac{1}{2(-x_{1}^{2}+x_{2}^{2})}\left( x_{1}\frac{\partial
}{\partial x_{1}}+x_{2}\frac{\partial }{\partial x_{2}}-x_{3}\frac{\partial
}{\partial x_{3}}\right) \}.
\end{equation*}%
It follows that the corresponding screen distribution $S(TM)$ is spanned by $%
Z_{1}$. Thus%
\begin{eqnarray*}
\nabla _{\xi }\xi &=&x_{1}\frac{\partial }{\partial x_{1}}+x_{2}\frac{%
\partial }{\partial x_{2}}+x_{3}\frac{\partial }{\partial x_{3}} \\
\bar{\nabla}_{\xi }\nabla _{\xi }\xi &=&x_{1}\frac{\partial }{\partial x_{1}}%
+x_{2}\frac{\partial }{\partial x_{2}}+x_{3}\frac{\partial }{\partial x_{3}}.
\end{eqnarray*}%
Then, we obtain%
\begin{equation*}
\gamma ^{\prime \prime \prime }\left( s\right) \wedge \gamma ^{\prime \prime
}\left( s\right) \wedge \gamma ^{\prime }\left( s\right) =0
\end{equation*}%
which shows that null cone has degenerate planar normal sections.
\end{example}

\begin{example}
\textbf{\ }Let $\mathbb{R}^3_1$ be the space $IR^{3}$ endowed with the semi
Euclidean metric%
\begin{equation*}
\bar{g}(x,y)=-x_{0}y_{0}+x_{1}y_{1}+x_{2}y_{2},(x=(x_{0},x_{1},x_{2})).
\end{equation*}%
The lightlike cone $\wedge _{0}^{2}$ is given by the equation $%
-(x_{0})^{2}+(x_{1})^{2}+\left( x_{2}\right) ^{2}=0$, $x\neq 0$. It is known
that $\wedge _{0}^{2}$ is a lightlike surface of $\mathbb{R}^3_1$ and the
radical distribution is spanned by a global vector field%
\begin{equation}
\xi =x_{0}\frac{\partial }{\partial x_{0}}+x_{1}\frac{\partial }{\partial
x_{1}}+x_{2}\frac{\partial }{\partial x_{2}}  \label{2.16}
\end{equation}%
on $\wedge _{0}^{2}$. The unique section $N$ is given by%
\begin{equation}
N=\frac{1}{2(x_{0})^{2}}\left( -x_{0}\frac{\partial }{\partial x_{0}}+x_{1}%
\frac{\partial }{\partial x_{1}}+x_{2}\frac{\partial }{\partial x_{2}}\right)
\label{2.17}
\end{equation}%
and is also defined. As $\xi $ is the position vector field we get%
\begin{equation}
\bar{\nabla}_{X}\xi =\nabla _{X}\xi =X,\text{ \ \ }\forall X\in \Gamma
\left( TM\right) .  \label{2.18}
\end{equation}%
Then, $A_{\xi }^{\ast }X+\tau \left( X\right) \xi +X=0$. As $A_{\xi }^{\ast
} $ is $\Gamma \left( S(TM)\right) $-valued we obtain%
\begin{equation}
A_{\xi }^{\ast }X=-PX,\text{ \ \ }\forall X\in \Gamma \left( TM\right)
\label{2.19}
\end{equation}%
Next, any $X\in \Gamma \left( S(T\wedge _{0}^{2})\right) $ is expressed by $%
X=X_{1}\frac{\partial }{\partial x_{1}}+X_{2}\frac{\partial }{\partial x_{2}}
$ \ where $(X_{1},X_{2})$ satisfy%
\begin{equation}
x_{1}X_{1}+x_{2}X_{2}=0  \label{2.20}
\end{equation}%
and then%
\begin{eqnarray}
\nabla _{\xi }X &=&\bar{\nabla}_{\xi }X=\overset{2}{\sum\limits_{A=0}}%
\sum\limits_{a=1}^{2}x_{A}\frac{\partial X_{a}}{\partial x_{A}}\frac{%
\partial }{\partial x_{a}},  \label{2.21} \\
\bar{g}\left( \nabla _{\xi }X,\xi \right) &=&\overset{2}{\sum\limits_{A=0}}%
\sum\limits_{a=1}^{2}x_{a}x_{A}\frac{\partial X^{a}}{\partial x_{A}}=-\left(
x_{1}X_{1}+x_{2}X_{2}\right) =0  \label{2.22}
\end{eqnarray}%
where (\ref{2.20}) is derived with respect to $x_{0},x_{1},x_{2}$. It is
known that $\wedge _{0}^{2}$ is a screen conformal lightlike surface with
conformal function $\varphi =\frac{1}{2(x_{0})^{2}}$. We also know that $%
A_{N}\xi =0$. By direct compute we find%
\begin{equation*}
A_{N}X=\frac{1}{2(x_{0})^{2}}A_{\xi }^{\ast }X.
\end{equation*}%
Now we evaluate $\gamma ^{\prime },\gamma ^{\prime \prime }$ and $\gamma
^{\prime \prime \prime }$
\begin{eqnarray*}
\gamma ^{\prime } &=&X=\left( 0,-x_{2},x_{1}\right) \\
\gamma ^{\prime \prime } &=&\nabla _{X}X+B\left( X,X\right) N \\
&=&\frac{1}{2}x_{0}\frac{\partial }{\partial x_{0}}-\frac{3}{2}x_{1}\frac{%
\partial }{\partial x_{1}}+x_{2}\frac{\partial }{\partial x_{2}} \\
\gamma ^{\prime \prime \prime } &=&\bar{\nabla}_{X}\nabla _{X}X+X\left(
B\left( X,X\right) \right) N+B\left( X,X\right) \bar{\nabla}_{X}N \\
&=&\nabla _{X}\nabla _{X}X+B\left( X,\nabla _{X}X\right) N+X\left( B\left(
X,X\right) \right) N-B\left( X,X\right) A_{N}X
\end{eqnarray*}%
using $A_{N}X$ in $\gamma ^{\prime \prime \prime }$ we get
\begin{equation*}
\gamma ^{\prime \prime \prime }=\frac{1}{2}x_{2}\frac{\partial }{\partial
x_{1}}-\frac{1}{2}x_{1}\frac{\partial }{\partial x_{2}}.
\end{equation*}%
Therefore $\gamma ^{\prime \prime \prime }$ and $\gamma ^{\prime }$are
linear dependence at $\forall p\in \wedge _{0}^{2}$ and we have
\begin{equation*}
\gamma ^{\prime }\wedge \gamma ^{\prime \prime }\wedge \gamma ^{\prime
\prime \prime }=0.
\end{equation*}%
Namely, $\wedge _{0}^{2}$ has non-degenerate planar normal sections.
\end{example}

\end{document}